\begin{document}

\title{Evolving Affine Evolutoids}



\author{Ady Cambraia Junior         \and
        Abílio Lemos 
}

\institute{Ady Cambraia Junior,  Abílio Lemos  \at
              Departamento de Matemática, Universidade Federal de Viçosa, Av. P. H. Rolfs, s/n, Campus Universitário, CEP: 36570-900, Viçosa, Minas Gerais, Brazil. \\
              \email{ady.cambraia@ufv.br, abiliolemos@ufv.br}  \           
}

\date{Received: date / Accepted: date}

\maketitle

\begin{abstract}

The envelope of straight lines affine normal to a plane curve $C$ is its affine evolute; the envelope of the affine lines tangent to $C$ is the original curve, together with the entire affine tangent line at each inflexion of $C$. In this paper, we consider plane curves without inflexions. We use some techniques of singularity theory to explain how the first envelope turns into the second, as the (constant) slope between the set of lines forming the envelope and the set of affine tangents to $C$ changes from $0$ to $1$. In particular, we guarantee the existence of the first slope for which singularities occur. Moreover, we explain how these singularities evolve in the discriminant surface.

\keywords{affine evolute \and envelopes \and affine evolutoids \and evolutoids \and singularity theory}
\subclass{53A15}
\end{abstract}

\section{Introduction}
\label{intro}

Let $\gamma$ be a plane curve, which we shall assume closed, smooth and without affine inflexions. The {\it envelope} of a family of lines is formed by intersections of infinitesimal consecutive lines or equivalently a curve tangent to all the lines. For example, the envelope of the family of affine tangents lines to $\gamma$ contains at least itself and the envelope of affine normals is called the {\it affine evolute} of $\gamma$.

It is natural to ask what lies "between" the envelope of affine tangents and the envelope of affine normals. Let us fix a number $\alpha$ ranging between 0 and 1 and consider the lines $L_\alpha$ that through by $\gamma(s)$ of slope $\alpha\gamma_s+(1-\alpha)\gamma_{ss}$, where $s$ is the parameter of affine arc-length. The euclidean case was investigated by Giblin and Warder \cite{Giblin3}.

This work explicits the envelope of lines $L_\alpha$, which we call affine evolutoid, and provide some results, such as: the regularity conditions of the envelope, existence of first $\alpha$ where singularities and conditions for existence of ordinary affine cusps occur. Moreover, we apply the results of the singularity theory to prove how the singularities evolve on the discriminant of the family to three parameters obtained from the equations that define $L_\alpha.$  More precisely, we found (locally) that the discriminant surfaces are cuspidal edges or swallowtail surfaces.

\section{Review of the affine geometry of planar curves}


In this section, we present the basic concepts of the affine differential geometry of planar smooth curves. For further details, see \cite{Nomizu,Buchin}.	

Let $\gamma: [0,1] \longrightarrow \mathbb{R}^2$ be a planar curve parametrized by $t$. The basic purpose of the planar affine differential geometry is to define a new parametrization, $s$, which is an affine-invariant, and the simplest affine-invariant parametrization $s$ is given by requiring, at every curve point $\gamma(s)$, the relation 
\begin{equation}\label{pcaa}
[ \gamma_s, \gamma_{ss}]=1,
\end{equation}

\noindent where $[ ,]$ is the notation for determinants. 
When a curve satisfies equation \eqref{pcaa}, we say it is
parameterized by affine arclength.

The vectors $\gamma_s$ and $\gamma_{ss}$ are the affine tangent and the affine normal, respectively.

The parameters $s$ and $t$ are related by 
$$ [\gamma_t,\gamma_{tt}]=\left[\gamma_ss_t,
\gamma_{ss}(s_t)^2+\gamma_ss_{tt}\right]=s_t^3[\gamma_s,\gamma_{ss}]=s_t^3$$
Thus,
$$ \dfrac{ds}{dt}= [\gamma_t,\gamma_{tt}]^{\frac{1}{3}}.$$
By differentiating the equation \eqref{pcaa}, we obtain
$$ [\gamma_s, \gamma_{sss}]=0 \Rightarrow
\gamma_{sss}+ \mu(s)\gamma_s=0,$$ for some $\mu(s) \in \mathbb{R}.$
The function $\mu(s)$ is the affine curvature and the simplest non-trivial affine differential invariant. Notice that
$$ [\gamma_{s}, \gamma_{sss}]=0  \Rightarrow \gamma_{sss}=-\mu(s)\gamma_s,$$ therefore, we conclude that
$$\mu(s)=[\gamma_{ss}, \gamma_{sss}].$$

\begin{theorem}\cite{Nomizu} \label{thm1}
Curves have constant affine curvature if and only if they are conic sections.
\end{theorem}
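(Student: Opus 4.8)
The plan is to prove the two implications separately.

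For the ``if'' direction I would invoke the fact that the affine curvature $\mu$ is, by construction, invariant under area-preserving (equi-affine) maps, together with the standard equi-affine classification of nondegenerate conics: each is equivalent to one of the three model curves $x^2+y^2=1$, $y=x^2/2$, or $xy=1$. It then suffices to compute $\mu$ on these three. Parametrizing the circle by $\gamma(t)=(\cos t,\sin t)$ one has $[\gamma_t,\gamma_{tt}]=1$, so $t$ is already the affine arclength and $\gamma_{sss}=-\gamma_s$, whence $\mu\equiv 1$. For $\gamma(t)=(t,t^2/2)$ again $[\gamma_t,\gamma_{tt}]=1$ and $\gamma_{sss}=0$, so $\mu\equiv 0$. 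For the branch $\gamma(t)=(e^t,e^{-t})$ of $xy=1$ one gets $[\gamma_t,\gamma_{tt}]=2$, hence $s=2^{1/3}t$; after rescaling, $\gamma_{sss}=2^{-2/3}\gamma_s$, so $\mu\equiv-2^{-2/3}$. In every case $\mu$ is constant, which settles this direction.

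For the converse, assume $\mu(s)\equiv c$. The relation defining $\mu$ then becomes the linear constant-coefficient ODE $\gamma_{sss}+c\,\gamma_s=0$, i.e. $(\gamma_s)''=-c\,\gamma_s$ in each coordinate, and I would split into cases according to the sign of $c$. If $c=k^2>0$ then $\gamma_s=A\cos(ks)+B\sin(ks)$ and integration gives $\gamma(s)=\tfrac1k A\sin(ks)-\tfrac1k B\cos(ks)+D$, the parametrization of an ellipse. If $c=0$ then $\gamma_{sss}=0$, so $\gamma(s)=As^2+Bs+D$ traces a parabola. If $c=-k^2<0$ then $\gamma_s=Ae^{ks}+Be^{-ks}$ and $\gamma(s)=\tfrac1k A e^{ks}-\tfrac1k B e^{-ks}+D$; setting $u=e^{ks}$ and eliminating $u$ and $u^{-1}$ from the two scalar coordinate equations via $u\cdot u^{-1}=1$ produces a quadratic relation in the coordinates, i.e. a hyperbola. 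In each case the pair of constant vectors appearing must be linearly independent, and this is forced by the normalization $[\gamma_s,\gamma_{ss}]=1\neq0$, so the resulting conic is genuinely nondegenerate.

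The computations here are routine; the one place needing care --- and the main obstacle --- is the converse: one must check that the constants of integration are constrained precisely so that the image is a nondegenerate conic (not a degenerate locus), carry out the reparametrization and the elimination of the parameter correctly in the hyperbolic case, and recall that the affine arclength parametrization is available in the first place because $[\gamma_t,\gamma_{tt}]$ never vanishes, which is the standing no-inflexion hypothesis.
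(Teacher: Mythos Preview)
The paper does not actually supply a proof of this theorem: it is stated with a citation to Nomizu--Sasaki \cite{Nomizu} and used as a known background fact. So there is no ``paper's own proof'' against which to compare.

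Your argument is the standard one and is essentially correct, but there is one inaccuracy in the ``if'' direction. You claim that every nondegenerate conic is \emph{equi-affinely} equivalent to one of the three fixed models $x^2+y^2=1$, $y=x^2/2$, $xy=1$, and then use equi-affine invariance of $\mu$. This is not quite right: under area-preserving affine maps, ellipses carry a continuous invariant (the enclosed area), and likewise hyperbolas, so not every ellipse is equi-affinely equivalent to the unit circle. The fix is immediate---either observe that under a general affine map of determinant $\lambda$ the curvature $\mu$ is merely rescaled by a power of $\lambda$, so constancy of $\mu$ is preserved and the affine (not equi-affine) classification into three types suffices; or simply compute $\mu$ for $(a\cos t,\,b\sin t)$ and $(a e^t,\,b e^{-t})$ directly, which is no harder than your model computations.

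The converse via the linear ODE $\gamma_{sss}+c\gamma_s=0$ is fine, and your remark that $[\gamma_s,\gamma_{ss}]=1$ forces the integration constants to be linearly independent (hence the conic nondegenerate) is the correct way to close that case.
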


\section{The affine normal and the affine curvature of a curve non parameterized by affine arclenght}

\begin{proposition} Let $\gamma: \mathbb{R} \longrightarrow \mathbb{R}$ be a regular curve parametrized by an arbitrary parameter $t$. The affine normal $\xi(t)$ is given by:

\begin{eqnarray*}\label{eq0}
 \xi(t)=\kappa^{-\dfrac{2}{3}}\gamma_{tt}-\dfrac{1}{3}\kappa_t\kappa^{-\dfrac{5}{3}}\gamma_t
\end{eqnarray*}
\end{proposition}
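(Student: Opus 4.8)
The plan is to obtain $\xi(t)$ directly from its definition as the affine normal $\gamma_{ss}$ of the affine-arclength reparametrization, by pushing the derivatives back to the parameter $t$ via the chain rule. Writing $\kappa := [\gamma_t,\gamma_{tt}]$, the previous section already records $\frac{ds}{dt} = \kappa^{1/3}$, hence $\frac{dt}{ds} = \kappa^{-1/3}$; this is essentially the only external ingredient I need.

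First I would express $\gamma_s$ and $\gamma_{ss}$ in terms of $t$-derivatives. By the chain rule $\gamma_s = \gamma_t\,\frac{dt}{ds}$, and differentiating once more with respect to $s$,
$$\gamma_{ss} = \gamma_{tt}\left(\frac{dt}{ds}\right)^{2} + \gamma_t\,\frac{d^{2}t}{ds^{2}}.$$
Next I would compute the two scalar coefficients. The first is immediate: $\left(\frac{dt}{ds}\right)^{2} = \kappa^{-2/3}$. For the second, I would differentiate $\frac{dt}{ds} = \kappa^{-1/3}$ again with respect to $s$, converting $\frac{d}{ds}$ into $\frac{d}{dt}\cdot\frac{dt}{ds}$:
$$\frac{d^{2}t}{ds^{2}} = \frac{d}{dt}\!\left(\kappa^{-1/3}\right)\frac{dt}{ds} = \left(-\tfrac{1}{3}\kappa^{-4/3}\kappa_t\right)\kappa^{-1/3} = -\tfrac{1}{3}\kappa_t\,\kappa^{-5/3}.$$
Substituting both coefficients into the expression for $\gamma_{ss}$ yields $\xi(t) = \kappa^{-2/3}\gamma_{tt} - \tfrac{1}{3}\kappa_t\kappa^{-5/3}\gamma_t$, which is the claimed formula. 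To close the argument I would note that this reparametrized second derivative really is the affine normal: the new parameter satisfies $[\gamma_s,\gamma_{ss}] = \left(\tfrac{dt}{ds}\right)^{3}[\gamma_t,\gamma_{tt}] = \kappa^{-1}\kappa = 1$, so by definition $\gamma_{ss}$ is the affine normal vector $\xi$.

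There is no genuine obstacle here; the only points requiring care are the bookkeeping of the chain rule when passing $\frac{d}{ds}$ through a function of $t$, and the implicit standing assumption that $\kappa = [\gamma_t,\gamma_{tt}] > 0$ along the curve (positively oriented, locally convex, no affine inflexions), so that the real powers $\kappa^{1/3}$, $\kappa^{-2/3}$, $\kappa^{-5/3}$ are well defined. If one prefers a more explicit route, the same identity follows by writing $\gamma$ as a function of $s(t)$, using $s'(t) = \kappa^{1/3}$ and $s''(t) = \tfrac{1}{3}\kappa^{-2/3}\kappa_t$, and inverting; but the direct chain-rule computation above is the shortest path.
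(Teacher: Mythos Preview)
Your proof is correct. The paper does not actually include a proof of this proposition; however, the argument it sketches for the subsequent proposition on affine curvature (``Note that $s_t=\kappa^{1/3}$ and $\gamma_s=\gamma_t\kappa^{-1/3}$. Now, calculate $\gamma_{ss}$, $\gamma_{sss}$ \ldots'') is exactly the chain-rule computation you carry out, so your approach coincides with the paper's intended method.
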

%
%
%
%
%

The affine curvature of a planar curve $\gamma$ parametrized by an arbitrary parameter is given in the next result.

\begin{proposition}
Let $\gamma$ be a smooth plane curve without inflexion points parametrized by an arbitrary parameter $t$. Considering
$\kappa=[\gamma_t,\gamma_{tt}]$, we conclude that the affine curvature is given by

\begin{eqnarray} \label{eq00}
\mu = \dfrac{1}{9}\left(3\kappa\kappa_{tt}-5\kappa_t^2+9\kappa[\gamma_{tt},\gamma_{ttt}]\right)\kappa^{-\frac{8}{3}}.
\end{eqnarray}
\end{proposition}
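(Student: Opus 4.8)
The plan is to reduce the computation of the affine curvature $\mu$ under an arbitrary parameter $t$ to the affine-arclength formula $\mu = [\gamma_{ss},\gamma_{sss}]$ established in the review section, by substituting the change of parameters $s = s(t)$ and carrying out the chain rule. First I would record the relation $s_t = \kappa^{1/3}$ where $\kappa = [\gamma_t,\gamma_{tt}]$, which follows from $\frac{ds}{dt} = [\gamma_t,\gamma_{tt}]^{1/3}$; then by successive differentiation I would express $s_{tt}$ and $s_{ttt}$ in terms of $\kappa$, $\kappa_t$, $\kappa_{tt}$. Concretely $s_{tt} = \tfrac{1}{3}\kappa^{-2/3}\kappa_t$ and $s_{ttt} = \tfrac{1}{3}\kappa^{-2/3}\kappa_{tt} - \tfrac{2}{9}\kappa^{-5/3}\kappa_t^2$.

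Next I would differentiate $\gamma$ three times with respect to $t$, using $\gamma_t = \gamma_s s_t$, and compute
\begin{eqnarray*}
\gamma_{tt} &=& \gamma_{ss} s_t^2 + \gamma_s s_{tt},\\
\gamma_{ttt} &=& \gamma_{sss} s_t^3 + 3\gamma_{ss} s_t s_{tt} + \gamma_s s_{ttt}.
\end{eqnarray*}
Then I would substitute these into the bracket $[\gamma_{tt},\gamma_{ttt}]$ and expand using bilinearity and antisymmetry of the determinant, together with the normalizations $[\gamma_s,\gamma_{ss}] = 1$, $[\gamma_s,\gamma_{sss}] = 0$, $[\gamma_{ss},\gamma_{sss}] = \mu$, and $[\gamma_s,\gamma_s] = [\gamma_{ss},\gamma_{ss}] = 0$. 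This collapses $[\gamma_{tt},\gamma_{ttt}]$ to a combination of $s_t^5\mu$ and terms in $s_t, s_{tt}, s_{ttt}$ coming from the $[\gamma_{ss},\gamma_s]$ contributions; specifically the surviving pieces are $s_t^5\mu$, $3 s_t^3 s_{tt}^2/s_t \cdot(\ldots)$ — more carefully, $[\gamma_{tt},\gamma_{ttt}] = s_t^5\mu + s_t^2 s_{ttt} - 3 s_t s_{tt}^2 + \ldots$ after cancellation, which one then rewrites in terms of $\kappa$.

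The last step is bookkeeping: replace $s_t = \kappa^{1/3}$, $s_{tt}$, $s_{ttt}$ by their expressions in $\kappa$, solve the resulting identity for $\mu$, and simplify the powers of $\kappa$. One should land on
\[
\mu = \tfrac{1}{9}\bigl(3\kappa\kappa_{tt} - 5\kappa_t^2 + 9\kappa[\gamma_{tt},\gamma_{ttt}]\bigr)\kappa^{-8/3},
\]
which is \eqref{eq00}. I would double-check consistency by noting that when $t = s$ we have $\kappa \equiv 1$, $\kappa_t = \kappa_{tt} = 0$, so the formula correctly reduces to $\mu = [\gamma_{ss},\gamma_{sss}]$.

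The main obstacle is purely computational stamina: the expansion of $[\gamma_{tt},\gamma_{ttt}]$ produces many terms, and one must be careful that every determinant involving a repeated vector vanishes and that the $\gamma_s$–$\gamma_{ss}$ and $\gamma_s$–$\gamma_{sss}$ brackets are substituted with the right signs. A mild subtlety is that $[\gamma_s,\gamma_{sss}] = 0$ is needed to kill what would otherwise be an unwanted term; this is exactly the identity obtained by differentiating \eqref{pcaa}, so it is available. No conceptual difficulty is expected beyond organizing the algebra.
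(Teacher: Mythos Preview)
Your approach is correct and will produce the formula; it is essentially the chain-rule computation the paper has in mind, but organized in the reverse direction. The paper inverts the change of variables: it writes $\gamma_s=\kappa^{-1/3}\gamma_t$, then differentiates to get $\gamma_{ss}$ and $\gamma_{sss}$ in terms of $\gamma_t,\gamma_{tt},\gamma_{ttt}$, and evaluates $\mu=[\gamma_{ss},\gamma_{sss}]$ directly, using along the way the identity $\kappa_t=[\gamma_t,\gamma_{ttt}]$. You instead express $\gamma_{tt},\gamma_{ttt}$ in terms of $\gamma_s,\gamma_{ss},\gamma_{sss}$, compute $[\gamma_{tt},\gamma_{ttt}]=s_t^5\mu-s_t^2 s_{ttt}+3s_t s_{tt}^2$, and then solve for $\mu$. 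Both routes are the same bookkeeping exercise; the paper's direction is marginally more direct since $\mu$ appears immediately as the bracket you want, whereas your route requires an extra ``solve for $\mu$'' step, but your version has the advantage that the affine normalizations $[\gamma_s,\gamma_{ss}]=1$ and $[\gamma_s,\gamma_{sss}]=0$ kill terms cleanly without needing the auxiliary identity $\kappa_t=[\gamma_t,\gamma_{ttt}]$. Your sanity check at $t=s$ is a nice touch.
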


\begin{proof}
Note that $s_t=\kappa^{\frac{1}{3}}$ and
$\gamma_s=\gamma_t\kappa^{-\frac{1}{3}}$. Now, calculate
$\gamma_{ss}$, $\gamma_{sss}$ and use the fact that
$\kappa_t=[\gamma_t,\gamma_{ttt}]$, thus
$\mu=[\gamma_{ss},\gamma_{sss}].$
\end{proof}
Consider a plane curve in the Monge's form without euclidean inflexions close to origin, that is,

$$\gamma(t)=\left(t, \displaystyle\frac{1}{2}a_2t^2+ \cdots +\dfrac{1}{k!}a_kt^k+g(t)t^{k+1}
\right),$$ where $a_i \in \mathbb{R}$, $a_2\neq0$ and $g$ is a smooth function. Using the previous theorem, the affine curvature of $\gamma$ in
$\gamma(0)$ is

$$\mu(0)=\dfrac{3a_2a_4-5a_3^2}{9a_2^{\frac{8}{3}}}$$

This means that the affine curvature function is an invariant affine differential of order 4 of $\gamma$.

\section{Affine Envelopes}\label{sec4}


Let $\gamma: I \longrightarrow \mathbb{R}^2$ be a smooth closed curve without affine inflexions. It is known that the envelope of affine tangents to $\gamma$ is formed by the curve itself and by affine tangents in the affine inflexion points, \cite{Sano}. It is also known that the affine normals are the affine evolute of curve $\gamma$. Inspired in the work \cite{Giblin3}, we asked what the envelope of lines with slope between affine tangent and affine normal to curve $\gamma$ would be. 

Let $(1-|\alpha|)\gamma_s+\alpha\gamma_{ss}$ be a vector between $\gamma_s$ and $\gamma_{ss}$, where $\alpha \in [-1,1]$. In this paper, we consider the case where $\alpha>0$, the case $\alpha<0$ is similar. 

We are interested in the envelope of lines with slope $v^\alpha=(1-\alpha)\gamma_s+\alpha\gamma_{ss}, \alpha \in [0,1],$ which we denote by $L_\alpha$. The equation of line $L_\alpha$ is given by $$\begin{array}{cccl}
F: & \mathbb{R}^2\times I & \longrightarrow & \mathbb{R}^2 \\
  & (X,s) & \longmapsto & F(X,s)=\left[X-\gamma, (1-\alpha)\gamma_s+\alpha\gamma_{ss}\right] \\
\end{array},
$$ where $[ \, ,]$ is the notation for determinants.

For $\alpha$ fixed, $F(X,s)=0$ refers to a family of lines, e.g., for each $\alpha$ we have a line and when $s$ varies, the line moves in the plane $xy.$

The envelope of family $F(X,s)$ is given by $$E_\alpha=\left\{X=(x,y)\in \mathbb{R}^2 | \textrm{there is} \ s \ \textrm{such that} \ F(X,s)=F_s(X,s)=0\right\}.$$ 

As $\alpha$ is fixed (constant) , it follows that $$F_s(X,s)=\left[-\gamma_s,(1-\alpha)\gamma_s+\alpha\gamma_{ss}\right]+\left[X-\gamma,(1-\alpha)\gamma_{ss}-\alpha\mu\gamma_s\right].$$

Here, we use the fact that $s$ is the parameter affine arclenght. Therefore, $\gamma_{sss}=-\mu(s)\gamma_s,$ where $\mu$ is the affine curvature of $\gamma$.
By solving the system $F=F_s=0$, we obtain

\begin{equation}\label{envelope}
X(s)=\gamma(s) + \dfrac{\alpha}{(1-\alpha)^2+\mu(s)\alpha^2}\left((1-\alpha)\gamma_s(s)+\alpha\gamma_{ss}(s)\right).
\end{equation} 

\begin{remark} \label{r1}
\
\begin{enumerate}
\item[(a)] Notice that $(1-\alpha)^2+\mu(s)\alpha^2\neq0.$ Otherwise, the affine curvature should be a negative constant and thus $\gamma$ would not be closed, see Theorem \ref{thm1}. 
\item[(b)] If $\alpha=1$, then the lines $F(X,s)=0$ are the affine normals to $\gamma$ and the envelope is the affine evolute, e. g., the set of points $\gamma+\dfrac{1}{\mu}\gamma_{ss}$, which are centers of conics doing $5-$contact with $\gamma$, also called centers of affine curvature of $\gamma.$ 
\item[(c)] If $\alpha=0$, then the lines are the affine tangents to $\gamma$ and the envelope is the original curve $\gamma.$ 
\end{enumerate}
\end{remark}


\section{Regularity of envelope}


Consider the envelope of family $F$ given by equation \eqref{envelope}. We propose to investigate when this curve is regular or not regular. In the next proposition, we give the conditions for this.

%

\begin{proposition}\label{regularidade_do_envelope}
The envelope \eqref{envelope} is not regular if and only if 
\begin{equation}\label{nonregularity}
\mu_s= \dfrac{(1-\alpha)((1-\alpha)^2+\mu\alpha^2)}{\alpha^3},
\end{equation} where $\mu_s$ is the derivative of affine curvature with respect to $s$ on $\gamma$.
\end{proposition}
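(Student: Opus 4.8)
The strategy is to compute the velocity vector $X_s(s)$ of the envelope parametrization \eqref{envelope} and determine precisely when it vanishes. Write $X(s)=\gamma(s)+\lambda(s)\,v^\alpha(s)$, where $\lambda(s)=\dfrac{\alpha}{(1-\alpha)^2+\mu(s)\alpha^2}$ and $v^\alpha(s)=(1-\alpha)\gamma_s(s)+\alpha\gamma_{ss}(s)$. Differentiating gives
$$X_s=\gamma_s+\lambda_s v^\alpha+\lambda\,v^\alpha_s,$$
and using $\gamma_{sss}=-\mu\gamma_s$ we have $v^\alpha_s=(1-\alpha)\gamma_{ss}+\alpha\gamma_{sss}=-\alpha\mu\gamma_s+(1-\alpha)\gamma_{ss}$. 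So $X_s$ is a combination of the basis $\{\gamma_s,\gamma_{ss}\}$, and I will collect its coefficients. The $\gamma_{ss}$-component is $\alpha\lambda_s+(1-\alpha)\lambda$ and the $\gamma_s$-component is $1-\alpha\mu\lambda+(1-\alpha)\lambda_s$.

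Since $\{\gamma_s,\gamma_{ss}\}$ is a frame (by \eqref{pcaa} the determinant $[\gamma_s,\gamma_{ss}]=1\ne 0$), the envelope fails to be regular exactly when both coefficients vanish simultaneously. The plan is to show these two scalar equations are in fact equivalent to the single condition \eqref{nonregularity}: compute $\lambda_s=-\dfrac{\alpha\cdot 2\mu\mu_s\alpha^2}{((1-\alpha)^2+\mu\alpha^2)^2}$... more carefully, $\lambda_s = -\dfrac{\alpha^3\mu_s}{((1-\alpha)^2+\mu\alpha^2)^2}$, substitute into the $\gamma_{ss}$-coefficient $\alpha\lambda_s+(1-\alpha)\lambda=0$, and solve for $\mu_s$; this yields
$$\mu_s=\frac{(1-\alpha)\big((1-\alpha)^2+\mu\alpha^2\big)^2}{\alpha^2\cdot\alpha\big((1-\alpha)^2+\mu\alpha^2\big)^{-1}}\cdot\frac{1}{\alpha^2},$$
which after simplification should collapse precisely to \eqref{nonregularity}. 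Then I must check the $\gamma_s$-coefficient is automatically zero under this relation (or, more likely, that the vanishing of the $\gamma_{ss}$-component alone already forces $X_s\parallel\gamma_s$ with the remaining scalar nonzero, so "not regular" is governed by a single equation — one should verify which component genuinely controls the singularity). The bookkeeping must also record that $(1-\alpha)^2+\mu\alpha^2\ne 0$ throughout, which is guaranteed by Remark \ref{r1}(a), so all divisions are legitimate.

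The main obstacle is purely computational: carefully differentiating the rational coefficient $\lambda(s)$, keeping the powers of $\alpha$ and of the denominator straight, and recognizing the resulting expression as \eqref{nonregularity} rather than a superficially different but equivalent form. A secondary subtlety is the logical structure of "if and only if": one direction (singular $\Rightarrow$ \eqref{nonregularity}) comes from setting the controlling coefficient to zero, and the converse (\eqref{nonregularity} $\Rightarrow$ singular) from substituting back and checking $X_s=0$. I would also note the degenerate endpoints: at $\alpha=0$ the envelope is $\gamma$ itself (regular, and the formula \eqref{nonregularity} degenerates), and at $\alpha=1$ it reduces to the classical criterion $\mu_s=0$ for singular points of the affine evolute — a useful consistency check to mention at the end.
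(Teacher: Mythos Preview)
Your approach is the same as the paper's: differentiate the envelope parametrization and determine when $X_s=0$. The paper streamlines the computation by observing directly that $X_s=A(s)\,v^\alpha$ with
\[
A(s)=\frac{1-\alpha}{(1-\alpha)^2+\mu\alpha^2}-\frac{\alpha^3\mu_s}{\big((1-\alpha)^2+\mu\alpha^2\big)^2},
\]
so that non-regularity is equivalent to the single scalar equation $A(s)=0$, which is exactly \eqref{nonregularity}. Your decomposition in the frame $\{\gamma_s,\gamma_{ss}\}$ recovers this: if you finish the arithmetic you will find the $\gamma_s$-coefficient equals $(1-\alpha)A(s)$ and the $\gamma_{ss}$-coefficient equals $\alpha A(s)$, so for $\alpha\in(0,1)$ the two conditions are not merely equivalent but identical up to a nonzero factor. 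This resolves the uncertainty you flag; note however that your parenthetical ``more likely'' alternative (that one coefficient vanishes while the other stays nonzero) is incorrect and would in fact contradict the existence of singular points altogether, so drop that branch.
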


\begin{proof}
Assume $\mu(s)\neq0$. By differentiating  the solution \eqref{envelope} of the envelope of $F$ with respect to parameter affine arclength $s$, we obtain $X_s=A(s)\left((1-\alpha)\gamma_s+\alpha\gamma_{ss}\right),$ where $$A(s)=\dfrac{(1-\alpha)}{{(1-\alpha)^2+\mu\alpha^2}}-\dfrac{\alpha^3\mu_s}{[(1-\alpha)^2+\mu\alpha^2]^2}.$$
Therefore, $X_s$ is zero if and only if $A(s)=0$, e. g., $\mu_s= \dfrac{(1-\alpha)((1-\alpha)^2+\mu\alpha^2)}{\alpha^3}.$
\end{proof}

For $\alpha=1$, the envelope corresponds to affine evolute. By differentiating the equation \eqref{envelope}, we obtain the familiar condition $\mu_s=0$, e. g., it says that $\gamma$ has an extreme of affine curvature, e. g., $\gamma$ has an affine vertex. In the case $\alpha=0$, the envelope corresponds to curve $\gamma$ itself, which is regular by assumption.


\begin{example}\label{ellipse}
Consider an ellipse parameterized by $\gamma(t)=(a\cos(t),b\sin(t)),$ where $b>a>0$ (for $a=2, b=3$ \ see Fig. \ref{fig1}). The reparameterization by affine arclenght is $\alpha(s)=\left(a\cos\left(\dfrac{s}{(ab)^{\frac{1}{3}}}\right),b\sin\left(\dfrac{s}{(ab)^{\frac{1}{3}}}\right)\right)$. If we apply the condition of Proposition \ref{regularidade_do_envelope}, we conclude that, for any $\alpha$, the affine evolutoid is smooth. This was expected because the affine curvature of $\alpha$ is always constant.
\end{example}

\begin{figure}[ht]
  \centering
  \includegraphics[scale=0.2]{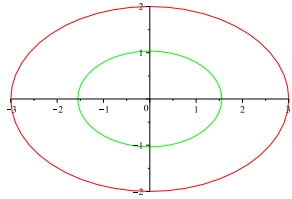}\\
  \caption{Ellipse $\gamma(t)=(2\cos(t),3\sin(t))$ and the affine evolutoid to $\alpha=0.75$. In true, for all $0\leq\alpha<1$, the affine evolutoids are smooth and for $\alpha=1$ the affine evolutoid is the degenerated affine evolute.}
  \label{fig1}
\end{figure}
%
\begin{example}\label{ex2}
Consider the curve $\gamma(t)=\left(\cos(2t)-\cos(t+a), \sin(2t)+\sin(t)\right)$. Here, the affine evolutoid presents singularities (see Fig. \ref{fig2}).
\end{example}
\bigskip

\begin{figure}[ht]
  \centering
  \includegraphics[scale=0.2]{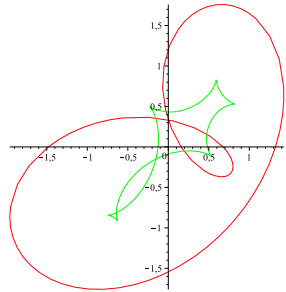}\\
  \caption{Curve $\gamma(t)=\left(\cos(2t)-\cos(t+1.9), \sin(2t)+\sin(t)\right)$ and the affine evolutoid for $\alpha=0.9$. }
  \label{fig2}
\end{figure}

The existence of a first $\alpha$ such that the affine evolutoid is not smooth is guaranteed in the next result.

\begin{theorem}[$\alpha$ born]\label{born}
Consider $\gamma$ as in Section \ref{sec4}. There is a first $\alpha$ such that the condition \eqref{nonregularity} given in Proposition \eqref{regularidade_do_envelope} occurs.
\end{theorem}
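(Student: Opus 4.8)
The plan is to rewrite the non-regularity condition \eqref{nonregularity} as a single equation $g(s,\alpha)=0$ and then show, by a continuity/intermediate-value argument in $\alpha$, that a solution appears for some $\alpha\in(0,1)$. Define
\[
g(s,\alpha)=\alpha^{3}\mu_s(s)-(1-\alpha)\bigl((1-\alpha)^2+\mu(s)\alpha^{2}\bigr).
\]
The envelope is non-regular at parameter $s$ for the given $\alpha$ exactly when $g(s,\alpha)=0$. Since $\gamma$ is closed, $s$ ranges over a compact interval (a circle), and $\mu,\mu_s$ are continuous and bounded; hence $g$ is continuous on the compact set $S^1\times[0,1]$. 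First I would examine the two ends of the $\alpha$-range. At $\alpha=0$ we get $g(s,0)=-1<0$ for every $s$, so no solution exists — consistent with Remark \ref{r1}(c). At $\alpha=1$ we get $g(s,1)=\mu_s(s)$, and since $\gamma$ is closed the function $\mu$ attains a maximum, so $\mu_s$ vanishes at some $s_0$; thus $g(s_0,1)=0$, i.e. a solution does exist at $\alpha=1$ (the affine vertex, as noted after Proposition \ref{regularidade_do_envelope}).

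Now define $h(\alpha)=\min_{s\in S^1} g(s,\alpha)$. By compactness of $S^1$ and continuity of $g$, the function $h$ is continuous on $[0,1]$ (this is the standard fact that the minimum of a jointly continuous function over a compact parameter depends continuously on the remaining variable). We have $h(0)=-1<0$, and $h(1)=\min_s \mu_s(s)\le 0$; moreover, since $\mu$ is non-constant on a closed curve (otherwise $\gamma$ is a conic by Theorem \ref{thm1}, and in that degenerate case the statement can be checked directly or excluded), $\mu_s$ takes both signs, so actually $h(1)<0$ is possible — in that case I instead track $M(\alpha)=\max_s g(s,\alpha)$: we have $M(0)=-1<0$ while $M(1)=\max_s\mu_s(s)>0$, so by the intermediate value theorem there is a first $\alpha_0\in(0,1]$ with $M(\alpha_0)=0$, i.e. the first $\alpha$ for which \eqref{nonregularity} admits a solution. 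Taking the infimum of the (closed, by continuity) set $\{\alpha: M(\alpha)\ge 0\}$ produces the desired first $\alpha$.

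The remaining point is to confirm that this infimum is attained and is genuinely the \emph{first} such $\alpha$: the set $\{\alpha\in[0,1] : \exists\,s,\ g(s,\alpha)=0\}=\{\alpha : M(\alpha)\ge 0\ \text{and}\ h(\alpha)\le 0\}$. Since $h(\alpha)\le h(0)$-type bounds are not automatic, I would note instead that $h(\alpha)\le 0$ for all $\alpha$ near the relevant range because $g(s,\alpha)<0$ somewhere (e.g. $g\to$ negative values is inherited from $\alpha$ small by continuity, or one checks $\min_s g$ stays $\le 0$ throughout since $g$ cannot be everywhere positive while the curve stays closed — if $g>0$ for all $s$ then $X_s$ never vanishes, which is consistent, so one only needs $M(\alpha)\ge0$). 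Thus the set is $\{\alpha : M(\alpha)\ge 0\}$, which is closed as the preimage of $[0,\infty)$ under the continuous map $M$; its infimum $\alpha_0$ is therefore attained and is the first $\alpha$ at which non-regularity occurs.

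The main obstacle I anticipate is the bookkeeping around the degenerate constant-curvature case and around whether one should work with $\max_s g$ or $\min_s g$: one must argue cleanly that $M(0)<0$ (straightforward, $M(0)=-1$) while $M(1)>0$ (requires $\mu$ non-constant, hence $\gamma$ not a conic, invoking Theorem \ref{thm1}), and that $M$ is continuous — the continuity of a parametrized max/min over a compact set is the one technical lemma that needs to be stated, though it is entirely standard. Everything else is the intermediate value theorem plus the explicit formula for $g$.
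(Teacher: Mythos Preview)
Your approach is correct and genuinely different from the paper's. The paper integrates the ODE \eqref{nonregularity} explicitly, obtaining $\mu(s)=-\tfrac{(1-\alpha)^2}{\alpha^2}+Ce^{(1-\alpha)s/\alpha}$, and then runs a compactness/limit argument on the resulting auxiliary function $G(\alpha,s)$. You bypass the ODE entirely: you form $g(s,\alpha)=\alpha^{3}\mu_s-(1-\alpha)\bigl((1-\alpha)^2+\mu\alpha^{2}\bigr)$, observe that $M(\alpha)=\max_{s\in S^1}g(s,\alpha)$ is continuous with $M(0)=-1<0$ and $M(1)=\max_s\mu_s\ge 0$, and take the infimum of the closed set $\{M\ge 0\}$. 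Your route is more elementary (no explicit ODE solution, only the intermediate value theorem and continuity of a parametrised maximum over a compact set) and also handles the conic case uniformly ($\mu$ constant gives $M(1)=0$, so the first $\alpha$ is $1$, matching Example~\ref{ellipse}). One point to tighten: you do not need the claim that $\{\alpha:\exists s,\ g(s,\alpha)=0\}$ coincides with $\{\alpha:M(\alpha)\ge 0\}$ --- your justification that ``$g$ cannot be everywhere positive while the curve stays closed'' is not obviously valid. It is enough to note that at the infimum $\alpha_0$ of $\{M\ge 0\}$ continuity forces $M(\alpha_0)=0$, so the maximiser itself gives the required zero of $g(\cdot,\alpha_0)$, while for every $\alpha<\alpha_0$ one has $g(s,\alpha)<0$ for all $s$ and hence no zero; this already shows $\alpha_0$ is the first $\alpha$ at which \eqref{nonregularity} is solvable.
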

\begin{proof}
The ordinary differential equation given in the condition \eqref{nonregularity} has the solution below: $$\mu(s)=-\dfrac{(1-\alpha)^2}{\alpha^2}+ Ce^{\dfrac{(1-\alpha)}{\alpha}s},$$ where $C\in \mathbb{R}.$
Define the function $G : (0,1]\times I  \longrightarrow   \mathbb{R},$ given by $G(\alpha,s)=-\mu(s)-\dfrac{(1-\alpha)^2}{\alpha^2}+ Ce^{\dfrac{(1-\alpha)}{\alpha}s}$. Fixing $s$, such function is continuous and defines a curve. Observe that $\lim_{\alpha\rightarrow0^+}|G(\alpha,s)|=\infty.$ Then, given any real number $M>0$, there is $\delta>0$, such that $|G(\alpha,s)|>M.$ Thus, $G|_{[\delta,1]\times\{s\}}$ is a continuous function defined in a compact set. Therefore there is $\alpha_0=\alpha_0(s)$, such that $G(\alpha_0,s)=\min_{\delta\leq\alpha\leq1}(G(\alpha,s))$.
\end{proof}

\begin{remark}
In the Example \ref{ellipse}, the $\alpha$ born occurs for $\alpha=1$. On the other hand, in the Example \ref{ex2}, the existence of first $\alpha$ such that the affine evolutoid is not smooth is guaranteed by Theorem \ref{born}, but, it is non trivial to explicit.
\end{remark}

\begin{proposition}\label{p5}
Assume $\mu\neq0$ and $\alpha\in (0,1).$ Then, the affine cusps, presented in Proposition \ref{regularidade_do_envelope}, are ordinary affine cusps if and only if  $\alpha\mu_{ss}\neq (1-\alpha)\mu_s$, where the derivatives are evaluated at the affine cusp point.
\end{proposition}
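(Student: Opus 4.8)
The plan is to invoke the standard nondegeneracy criterion for an ordinary cusp of a parametrized plane curve: a curve $X(s)$ has an ordinary (affine) cusp at $s_0$ precisely when $X_s(s_0)=0$, $X_{ss}(s_0)\neq 0$, and $X_{ss}(s_0)$, $X_{sss}(s_0)$ are linearly independent, i.e. $[X_{ss}(s_0),X_{sss}(s_0)]\neq 0$ — this is the condition for the map-germ $X$ at $s_0$ to be $\mathcal{A}$-equivalent to $t\mapsto(t^2,t^3)$. By Proposition \ref{regularidade_do_envelope} the singular points of the envelope \eqref{envelope} are exactly the $s_0$ at which $A(s_0)=0$, so the whole question reduces to computing the second and third $s$-derivatives of $X$ at such a point.

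First I would write $X_s=A(s)\,w(s)$ with $w=(1-\alpha)\gamma_s+\alpha\gamma_{ss}$, exactly as in the proof of Proposition \ref{regularidade_do_envelope}, and differentiate twice. Since $A(s_0)=0$, the expressions collapse to
\begin{equation*}
X_{ss}(s_0)=A_s(s_0)\,w(s_0),\qquad X_{sss}(s_0)=A_{ss}(s_0)\,w(s_0)+2A_s(s_0)\,w_s(s_0),
\end{equation*}
so that $[X_{ss}(s_0),X_{sss}(s_0)]=2A_s(s_0)^2\,[w(s_0),w_s(s_0)]$. A short computation with $w_s=(1-\alpha)\gamma_{ss}-\alpha\mu\gamma_s$ (using $\gamma_{sss}=-\mu\gamma_s$ and $[\gamma_s,\gamma_{ss}]=1$) gives $[w,w_s]=(1-\alpha)^2+\alpha^2\mu$, which is nonzero by Remark \ref{r1}(a); moreover $w(s_0)\neq 0$ because $\gamma_s,\gamma_{ss}$ are linearly independent and $\alpha\in(0,1)$. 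Hence both nondegeneracy conditions, $X_{ss}(s_0)\neq 0$ and $[X_{ss}(s_0),X_{sss}(s_0)]\neq 0$, hold simultaneously if and only if $A_s(s_0)\neq 0$.

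It then remains to put $A_s(s_0)$ into the stated form. Writing $D=(1-\alpha)^2+\alpha^2\mu$, so that $A=(1-\alpha)D^{-1}-\alpha^3\mu_s D^{-2}$ and $D_s=\alpha^2\mu_s$, differentiation gives
\begin{equation*}
A_s=-(1-\alpha)\alpha^2\mu_s D^{-2}-\alpha^3\mu_{ss}D^{-2}+2\alpha^5\mu_s^2 D^{-3}.
\end{equation*}
At the cusp point the relation $A(s_0)=0$ — that is, condition \eqref{nonregularity} — reads $\alpha^3\mu_s=(1-\alpha)D$; substituting this once inside the last term rewrites $2\alpha^5\mu_s^2 D^{-3}$ as $2(1-\alpha)\alpha^2\mu_s D^{-2}$, and collecting terms yields $A_s(s_0)=\alpha^2 D^{-2}\big((1-\alpha)\mu_s-\alpha\mu_{ss}\big)$. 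Since $\alpha^2 D^{-2}\neq 0$, this is nonzero if and only if $\alpha\mu_{ss}\neq(1-\alpha)\mu_s$ at the cusp point, which is the assertion.

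The only slightly delicate points are fixing the correct nondegeneracy definition of an ordinary affine cusp (and observing that it is affine-invariant, so it is legitimate to test it in the affine arclength parametrization) and carrying the substitution $\alpha^3\mu_s=(1-\alpha)D$ cleanly through the formula for $A_s$; the remaining manipulations are routine bookkeeping, and I do not anticipate a serious obstacle.
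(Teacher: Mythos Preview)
Your proposal is correct and follows essentially the same approach as the paper: write $X_s=A(s)\,v^\alpha$, differentiate twice, and reduce the ordinary-cusp criterion at a point where $A=0$ to the single condition $A_s\neq 0$ via $[X_{ss},X_{sss}]=2A_s^2\big((1-\alpha)^2+\alpha^2\mu\big)$. You in fact go a bit further than the paper, which simply asserts that $A_s=0$ ``gives the required formula'': you explicitly compute $A_s$ and use the cusp relation $\alpha^3\mu_s=(1-\alpha)D$ to simplify it to $\alpha^2D^{-2}\big((1-\alpha)\mu_s-\alpha\mu_{ss}\big)$, which is a welcome clarification.
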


\begin{proof}
The condition for an ordinary affine cusp is in fact that the second and third derivatives of $\gamma$ evaluated at the affine cusp point should be independent vectors. We see that $X_s=A(s)v^\alpha,$ where $v^\alpha=((1-\alpha)\gamma_s+\alpha\gamma_{ss})$. By differentiating $X_s$ twice, we obtain $$ X_{ss}=A_sv^\alpha+Av^\alpha_s, \ \ X_{sss}=A_{ss}v^\alpha+2A_sv^\alpha_s+Av^\alpha_{ss}.$$
If $A(s)=0$, we have $$[X_{ss},X_{sss}]=2A_s^2[v^\alpha,v^\alpha_s]=2A_s^2\left((1-\alpha)^2+\mu\alpha^2\right).$$ Thus, the condition for these vectors to be linearly dependent is $2A_s^2[v^\alpha,v^\alpha_s]=0$, e. g., $A_s=0$. Thus, we obtain the required formula, since $(1-\alpha)^2+\mu\alpha^2\neq0$, already that $\mu\neq0$ and $\alpha\in(0,1)$.
\end{proof}

This article aims to study not only a single value of $\alpha$ but also what happens to $E_\alpha$ as $\alpha$
varies. For such, the investigation is conducted in a broader context.


\section{Discriminants and singularity theory}

Consider the family  of functions of one variable $s$ with three parameters $(x,y,\alpha)$ 

\begin{equation}\label{F}
F(X,\alpha,s)=\left[ X-\gamma(s), (1-\alpha)\gamma_s+\alpha\gamma_{ss}\right],
\end{equation}

 where $X=(x,y), \gamma(s)=(x(s),y(s))$ and $s$ is the parameter of affine arclength. The discriminant of this family is given by 

\begin{equation}\label{discriminant}
D_F=\{(X,\alpha) : \textrm{there is} \  s \ \textrm{such that} \ F(X,\alpha,s)=F_s(X,\alpha,s)=0\}.
\end{equation}
This discriminant is the union of all the envelopes of lines $L_\alpha$ for each $\alpha.$

Now, consider the discriminant $D_F$ and the function $h(x,y)=\alpha$. The level sets $h=constant$ are the individual envelopes of the family. We intend to investigate precisely how they change as $\alpha$ varies.

\begin{example} Consider the ellipse $\gamma(t)=(3\cos(t),2\sin(t))$ and the curve $\sigma(t)=(\cos(2t)-\cos(t+1.9), $ $\sin(2t)+\sin(t))$. Let $D_F$ be the discriminant surface associated to $\gamma$ and $D_G$, the discriminant surface associated to $\sigma$, as illustrated in Fig. \ref{fig3}. Remark that, in discriminant surface, $D_G$ seems to have cuspidal edges and swallowtail surfaces\footnote{For details about cusps, cuspidal edges and swallowtail surface, see \cite{Giblin4}.}, and the function $h$ seems to have level sets which undergo a swallowtail transition for certain values of $\alpha$. We are interested in verifying these observations.
\end{example}

\begin{figure}[ht]
\centering
\subfigure[Discriminant surface $D_F$]{\includegraphics[width=4cm]{{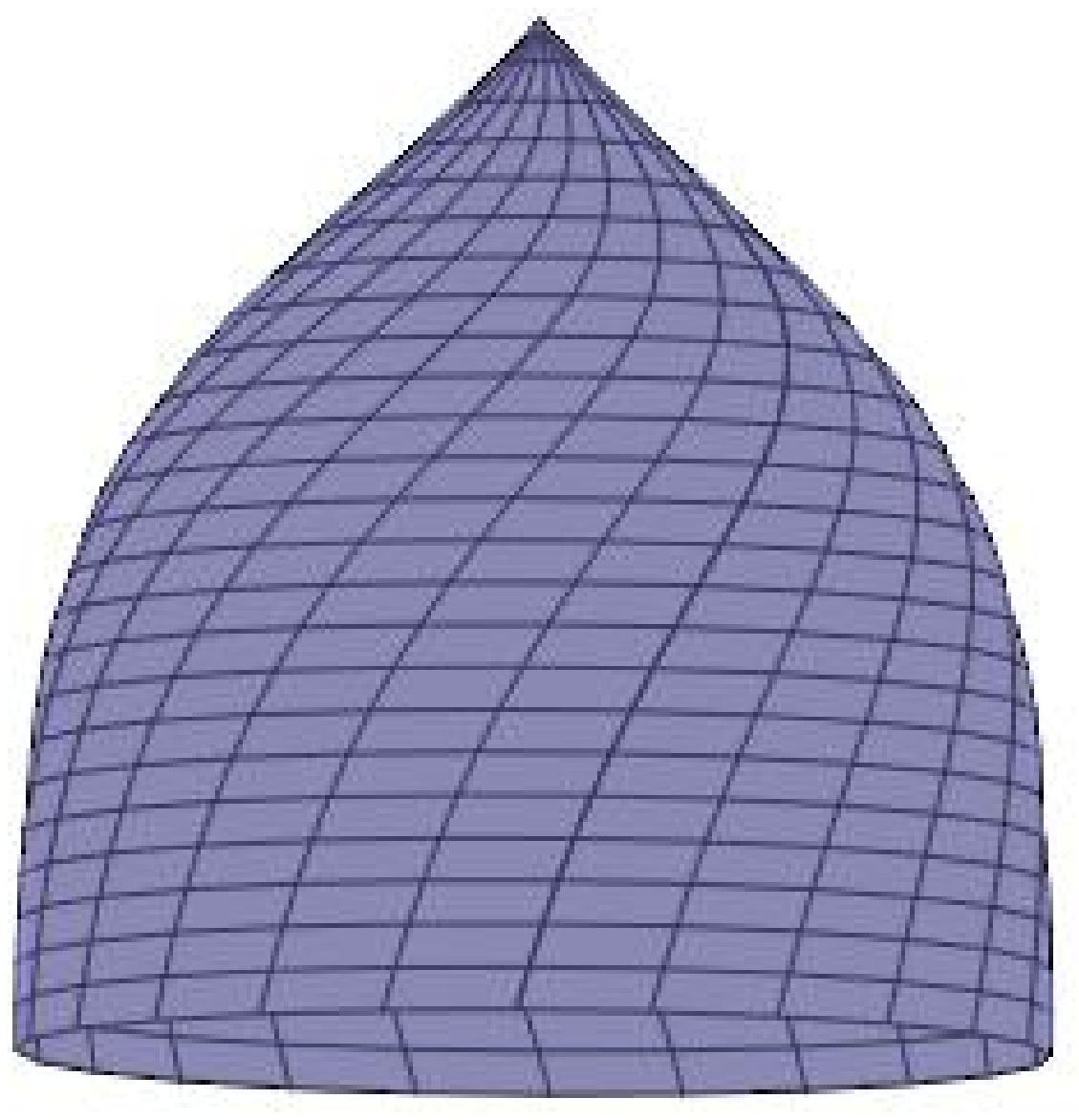}}}
\subfigure[Discriminant surface $D_G$]{\includegraphics[width=4cm]{{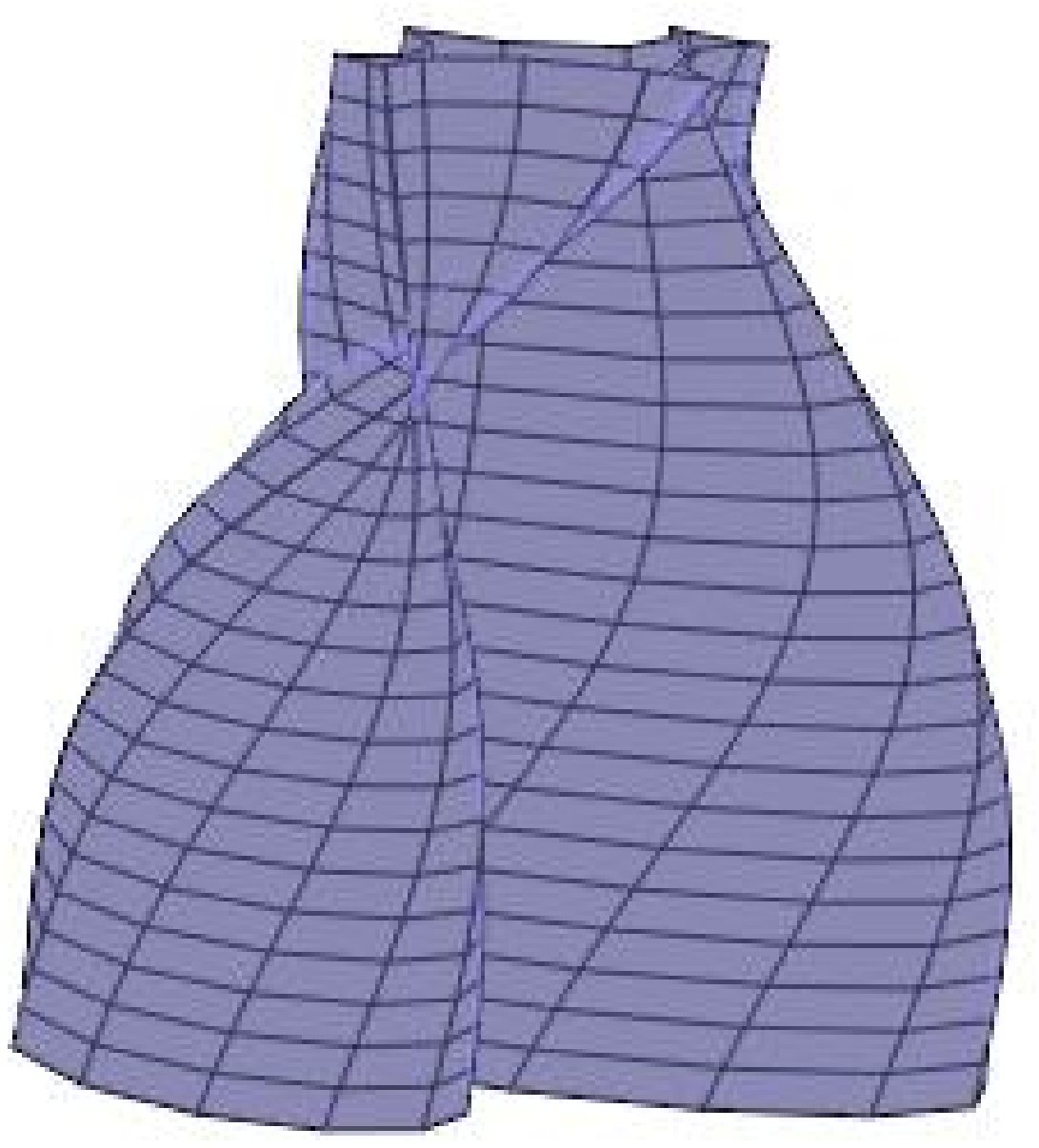}}}
\caption{For the discriminant surface $D_F$ ($\alpha$-axis is vertical) $\alpha=0$ corresponds to the bottom that is the original ellipse and $\alpha=1$ is the top, which corresponds to the envelope of affine normals, which degenerates at a point. For the discriminant surface $D_G$, $\alpha=0$ corresponds to original curve $\sigma$, and $\alpha=1$ corresponds to affine evolute (envelope of affine normals, which has six cusps). In $D_G$, cuspidal edges appear and the horizontal sections seem to undergo a swallowtail transition.} \label{fig3}
\end{figure}

To verify the observations given in the example above, we shall apply the results from the singularity theory which allow us to make precise statements about how the envelopes evolve as $\alpha$ changes.

%

\begin{definition}
For $(X_0,\alpha_0)=(x_0,y_0,\alpha_0)$ the function $f(s) = F(X_0,\alpha_0,s)$ has singularity
\begin{enumerate}
\item[(i)] type $A_2$ at $s = s_0$ if $f'(s_0) = f''(s_0) = 0, f'''(s_0)\neq 0$, \\
\item[(ii)] type $A_3$ at $s = s_0$ if $f'(s_0) = f''(s_0) = 0, f'''(s_0)=0, f^{(4)}(s_0)\neq 0$.
\end{enumerate}
\end{definition}

\begin{proposition}\label{P5}
Let the point $(x_0,y_0,\alpha_0)=(X_0,\alpha_0)$, which satisfies $F=F_s=0$ and suppose $\mu(s_0)\neq 0.$ Then, $f(s)=F(X_0,\alpha_0,s)$ has singularity
\begin{enumerate}
\item[$($i$)$] type $A_2$ at $s_0$, if $ \alpha\mu_{ss}-(1-\alpha)\mu_s\neq0;$ \\
\item[$($ii$)$] type $A_3$ at $s_0$, if $\alpha^5\mu_{sss}\neq-(1-\alpha)^3((1-\alpha)^2+\alpha^2\mu).$
\end{enumerate}
\end{proposition}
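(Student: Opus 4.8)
The plan is to compute the successive $s$-derivatives of $f(s)=F(X_0,\alpha_0,s)=[X_0-\gamma(s),v^\alpha(s)]$ at $s_0$, where $v^\alpha=(1-\alpha)\gamma_s+\alpha\gamma_{ss}$, and read off the vanishing/non-vanishing conditions. Since $(X_0,\alpha_0)$ lies on the envelope, we already know $f(s_0)=F=0$ and $f'(s_0)=F_s=0$, so by \eqref{envelope} we have $X_0-\gamma(s_0)=\lambda\, v^\alpha(s_0)$ with $\lambda=\dfrac{\alpha}{(1-\alpha)^2+\mu\alpha^2}$. The key structural fact, already used in the proof of Proposition \ref{p5}, is that the relevant brackets all reduce to multiples of $[v^\alpha,v^\alpha_s]=(1-\alpha)^2+\mu\alpha^2$, because $[X_0-\gamma,\,\cdot\,]$ kills the $v^\alpha$-component of any vector. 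So the strategy is purely computational bookkeeping: express everything in the frame $\{v^\alpha,v^\alpha_s\}$ (legitimate since $[v^\alpha,v^\alpha_s]\neq 0$ when $\mu\neq 0$), using $\gamma_{sss}=-\mu\gamma_s$ to differentiate $v^\alpha$ repeatedly.

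First I would record $v^\alpha_s=(1-\alpha)\gamma_{ss}+\alpha\gamma_{sss}=(1-\alpha)\gamma_{ss}-\alpha\mu\gamma_s$, and then $v^\alpha_{ss}$, $v^\alpha_{sss}$, each time replacing $\gamma_{sss}$ by $-\mu\gamma_s$ and $\gamma_{ssss}$ by $-\mu_s\gamma_s-\mu\gamma_{ss}$, etc. Then $f''(s_0)=[X_0-\gamma,v^\alpha_{ss}]=\lambda[v^\alpha,v^\alpha_{ss}]$, and differentiating once more, using $f''(s_0)=0$ together with $X_0-\gamma=\lambda v^\alpha$, gives $f'''(s_0)$ as a multiple of $[v^\alpha,v^\alpha_{sss}]$ plus lower terms that vanish. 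Carrying this out, $f'''(s_0)$ should come out proportional to $\alpha\mu_{ss}-(1-\alpha)\mu_s$ (up to the nonzero factor involving $(1-\alpha)^2+\alpha^2\mu$ and powers of $\lambda$), which is exactly the $A_2$ condition and matches the ordinary-cusp condition of Proposition \ref{p5}; this is the content of part (i). For part (ii), assuming in addition $f'''(s_0)=0$, one computes $f^{(4)}(s_0)$ the same way and checks it is proportional to $\alpha^5\mu_{sss}+(1-\alpha)^3((1-\alpha)^2+\alpha^2\mu)$, giving the stated $A_3$ condition.

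The main obstacle is not conceptual but the risk of algebraic slips in the repeated differentiations: one must keep careful track of which terms are proportional to $v^\alpha$ (hence killed by the bracket with $X_0-\gamma$) and which survive, and one must consistently substitute the affine Frenet-type relation $\gamma_{sss}=-\mu\gamma_s$ and its derivatives before evaluating. A clean way to organize this is to note that, modulo the $v^\alpha$-direction, each derivative of $v^\alpha$ is determined by its $v^\alpha_s$-component, so $f^{(k)}(s_0)$ is $\lambda$ times the $v^\alpha_s$-coefficient of $v^{\alpha,(k-1)}$ plus correction terms that are forced to vanish by the lower-order conditions $f^{(j)}(s_0)=0$ for $j<k$. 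One should also double-check compatibility: since $A(s)=0$ in Proposition \ref{regularidade_do_envelope} is equivalent to non-regularity of the envelope, and $A_s=0$ in Proposition \ref{p5} corresponds to the cusp being non-ordinary, the $A_2$ condition here should reduce to ``$A(s_0)=0$ but $A_s(s_0)\neq 0$'' and the $A_3$ condition to ``$A(s_0)=A_s(s_0)=0$ but $A_{ss}(s_0)\neq 0$'', which furnishes an independent sanity check on the final formulas; indeed I would present the proof by relating $f^{(k)}(s_0)$ directly to derivatives of $A$, since $X_s=A\,v^\alpha$ already encodes the degeneracy.
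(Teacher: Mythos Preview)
Your overall plan—differentiate $F$ successively in $s$, substitute $X_0-\gamma=\lambda v^\alpha$ and $\gamma_{sss}=-\mu\gamma_s$, and read off the vanishing/non-vanishing conditions—is precisely the paper's approach; the paper simply records that $F_{ss}=0$ amounts to $\alpha^3\mu_s=(1-\alpha)\bigl((1-\alpha)^2+\alpha^2\mu\bigr)$, then differentiates once or twice more and substitutes this relation (and, for (ii), the relation $\alpha\mu_{ss}=(1-\alpha)\mu_s$).

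There is, however, a real slip in your bookkeeping that would derail the computation if followed literally. The claim $f''(s_0)=[X_0-\gamma,v^\alpha_{ss}]$ is false: Leibniz on $f=[X_0-\gamma,v^\alpha]$ also produces terms from $\partial_s(X_0-\gamma)=-\gamma_s$, and these are \emph{not} killed by any lower-order condition. Concretely,
\[
f''(s)=-[\gamma_{ss},v^\alpha]-2[\gamma_s,v^\alpha_s]+[X_0-\gamma,v^\alpha_{ss}]
      =-(1-\alpha)+[X_0-\gamma,v^\alpha_{ss}],
\]
and since $[v^\alpha,v^\alpha_{ss}]=\alpha^2\mu_s$ one gets $f''(s_0)=-(1-\alpha)+\alpha^3\mu_s/\bigl((1-\alpha)^2+\alpha^2\mu\bigr)$, whose vanishing is exactly the non-regularity condition \eqref{nonregularity}. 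If you drop the $-(1-\alpha)$ you would wrongly obtain $\mu_s=0$. The same caveat applies at each higher order: those ``correction terms'' do not vanish and must be carried along; they are what produce the $(1-\alpha)$-factors in the stated $A_2$ and $A_3$ conditions. With that correction your scheme goes through and matches the paper, and your proposed cross-check against $A,\,A_s,\,A_{ss}$ is a good way to catch exactly this kind of slip.
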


\begin{proof} 
$(i)$  The equation $F_{ss}=0$ implies 
\begin{equation} \label{eq12}
\alpha^3\mu_s=(1-\alpha)((1-\alpha)^2+\alpha^2\mu).
\end{equation}

By differentiating $F$ with respect to $s$ three times, using the equation \eqref{eq12} and the hypothesis $\alpha\mu_{ss}-(1-\alpha)\mu_s\neq0$, we obtain $F_{sss}\neq0.$

$(ii)$  The equation $F_{sss}=0$ implies 
\begin{equation} \label{eq13}
\alpha\mu_{ss}-(1-\alpha)\mu_s=0.
\end{equation}

By differentiating $F$ with respect to $s$ four times, using the equation \eqref{eq13} and the hypothesis $\alpha^5\mu_{sss}\neq-(1-\alpha)^3((1-\alpha)^2+\alpha^2\mu),$ we obtain $F_{ssss}\neq0.$
\end{proof}

We highlight the following criterion (for further details, see \cite{Giblin4}), which is used for studying the behavior of singularities. 

\begin{definition}[Criterion for versality]\label{versal} Let $H(X,z,s)=H(x,y,z,s)$ be a family to $3$ parameters. Suppose that $H = H_s = 0$ at $(X_0,z_0, s_0)$ and $h(s) = H(X_0,z_0,s)$ has an $A_r$ singularity at $s_0.$ Consider the partial derivatives $H_{x} ,H_y, H_z$, evaluated at $(X_0,z_0,s_0)$ and, in particular, their Taylor polynomials $T_i$ up to degree $r-1,$ expanded about $s_0$ (so these have $r$ terms). The family $H(X,z,s)$ is called a versal unfolding of $h$ at $s_0$ if the $T_i$ spans a vector space of dimension $r$. Thus, if the coefficients in the $T_i$ are placed as the columns of an $r \times 3$ matrix, the rank is $r.$ Clearly, this is possible only for $r \leq 3.$
\end{definition}

\begin{remark}[See \cite{Giblin4}]\label{T3}
It is known about the Singularity Theory that, if a family $H$ satisfies the criterion of the definition above, then in a neighborhood of $(X_0,z_0)\in D_H$, the discriminant is locally diffeomorphic to a cuspidal edge surface when $r=2$, and to a swallowtail surface, when $r=3$.
\end{remark}

In the next result, we showed that $F$, defined by equation \eqref{F}, satisfies the criterion given in Definition \ref{versal}.

\begin{theorem}
The family $F$ satisfies the conditions of the criterion given in Definition \ref{versal}. Thus, when $f(s) = F(X_0,\alpha_0, s)$ has an $A_r$ singularity at $s_0, r = 2$ or $3$, in the cases covered by Proposition \ref{P5}, the discriminant $D_F$ is always locally diffeomorphic to a standard cuspidal edge ($r = 2$) or a standard swallowtail surface ($r = 3$) in a neighborhood of $(X_0,\alpha_0)$.
\end{theorem}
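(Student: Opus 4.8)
The plan is to verify the versality criterion from Definition~\ref{versal} directly for the family $F$ of \eqref{F}, writing $z=\alpha$ for the third parameter. I would first record the three partial derivatives at a point $(X_0,\alpha_0,s_0)$ lying on the discriminant. Since $F(X,\alpha,s)=[X-\gamma(s),(1-\alpha)\gamma_s+\alpha\gamma_{ss}]$ is affine in $X=(x,y)$, the derivatives $F_x$ and $F_y$ are simply the two components of the vector $v^\alpha=(1-\alpha)\gamma_s+\alpha\gamma_{ss}$ rotated by $\pi/2$ (the determinant $[w,v^\alpha]$ is linear in $w$), so $F_x=-(v^\alpha)_2$ and $F_y=(v^\alpha)_1$; both are, as functions of $s$, combinations of the coordinates of $\gamma_s$ and $\gamma_{ss}$. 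The third derivative is $F_\alpha=[X-\gamma,-\gamma_s+\gamma_{ss}]$, a scalar function of $s$ once $X=X_0$ is fixed.

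Next I would expand each of $F_x,F_y,F_\alpha$ in a Taylor polynomial about $s_0$ to degree $r-1$ and assemble the coefficients into the $r\times 3$ matrix $T$ of Definition~\ref{versal}; the claim is that $\operatorname{rank} T = r$ in each case. For $r=2$ (the $A_2$ case of Proposition~\ref{P5}) the matrix is $2\times 3$ with rows built from $(F_x,F_y,F_\alpha)$ and $(F_{xs},F_{ys},F_{\alpha s})$ at $s_0$. The first two columns of the top row give the vector $v^\alpha$ (rotated), and the first two columns of the bottom row give $v^\alpha_s=(1-\alpha)\gamma_{ss}+\alpha\gamma_{sss}=(1-\alpha)\gamma_{ss}-\alpha\mu\gamma_s$ (rotated); since $[v^\alpha,v^\alpha_s]=(1-\alpha)^2+\alpha^2\mu\neq 0$ by Remark~\ref{r1}(a), the $2\times 2$ minor formed by columns $1$ and $2$ is already nonzero, so the rank is $2$ with no further work. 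For $r=3$ (the $A_3$ case) I would take the $3\times 3$ matrix whose rows come from $(F_x,F_y,F_\alpha)$, $(F_{xs},F_{ys},F_{\alpha s})$, $(F_{xss},F_{yss},F_{\alpha ss})$ and show its determinant is nonzero. The $2\times 2$ block in the upper-left two columns is nondegenerate as just observed, so it suffices to check that the third column $(F_\alpha,F_{\alpha s},F_{\alpha ss})^{T}$ is not in the span of the first two columns — equivalently, expanding the determinant along the third column, to show a certain combination of $F_\alpha,F_{\alpha s},F_{\alpha ss}$ evaluated at $s_0$ does not vanish. Here I would use $F=F_s=0$ to solve for $X_0-\gamma(s_0)$ in terms of $v^\alpha$ and the curvature data (this is exactly \eqref{envelope}), substitute into $F_\alpha=[X_0-\gamma,-\gamma_s+\gamma_{ss}]$ and its $s$-derivatives, and reduce everything via $\gamma_{sss}=-\mu\gamma_s$ and the relations \eqref{eq12}, \eqref{eq13} that hold at an $A_3$ point. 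The determinant should come out to a nonzero multiple of $\alpha^5\mu_{sss}+(1-\alpha)^3((1-\alpha)^2+\alpha^2\mu)$ — precisely the quantity assumed nonzero in Proposition~\ref{P5}(ii) — which closes the argument. Once the rank conditions hold, Remark~\ref{T3} gives the stated local diffeomorphism type of $D_F$ immediately.

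The main obstacle will be the $r=3$ computation: one must differentiate $F_\alpha$ twice in $s$ while carrying the substitution $X_0-\gamma=\tfrac{\alpha}{(1-\alpha)^2+\alpha^2\mu}v^\alpha$ and repeatedly simplifying with $\gamma_{sss}=-\mu\gamma_s$, $\gamma_{ssss}=-\mu_s\gamma_s-\mu\gamma_{ss}$, and the $A_2$/$A_3$ defining equations for $\mu_s$ and $\mu_{ss}$. The bookkeeping is delicate because several cancellations must occur before the residual factor $\alpha^5\mu_{sss}+(1-\alpha)^3((1-\alpha)^2+\alpha^2\mu)$ emerges; it is worth doing this in the basis $\{\gamma_s,\gamma_{ss}\}$ and tracking only the coefficients, using $[\gamma_s,\gamma_{ss}]=1$ to evaluate all determinants. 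Everything else — identifying the partials, using $[v^\alpha,v^\alpha_s]\neq0$, and invoking Remark~\ref{T3} — is routine.
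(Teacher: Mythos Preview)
Your $A_2$ argument is exactly the paper's: the $2\times 2$ minor in the $(x,y)$ columns equals $[v^\alpha,v^\alpha_s]=(1-\alpha)^2+\alpha^2\mu\neq 0$, so rank $2$ is automatic.

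The $A_3$ case, however, contains a genuine error. You predict that the $3\times 3$ versality determinant will reduce to a nonzero multiple of $\alpha^5\mu_{sss}+(1-\alpha)^3((1-\alpha)^2+\alpha^2\mu)$, i.e.\ the quantity in Proposition~\ref{P5}(ii). It cannot. The entries of the third row are $F_{xss},F_{yss},F_{\alpha ss}$, and these involve at most $\gamma_{ssss}=-\mu_s\gamma_s-\mu\gamma_{ss}$; hence the whole determinant depends only on $\mu$ and $\mu_s$, never on $\mu_{ss}$ or $\mu_{sss}$. The inequality in Proposition~\ref{P5}(ii) is the condition $f^{(4)}(s_0)\neq 0$ guaranteeing that the singularity is \emph{exactly} $A_3$; it is not, and cannot be, the versality condition.

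What actually happens (and what the paper does) is this: after substituting $X_0-\gamma=\tfrac{\alpha}{(1-\alpha)^2+\alpha^2\mu}\,v^\alpha$ and using the $F_{ss}=0$ relation $\alpha^3\mu_s=(1-\alpha)((1-\alpha)^2+\alpha^2\mu)$ to eliminate $\mu_s$, the $3\times 3$ determinant simplifies to $(\alpha^2\mu+3(1-\alpha)^2)/\alpha$. This vanishes only if $\mu(s_0)=-3(1-\alpha)^2/\alpha^2<0$, and the paper rules that out by invoking the standing global hypotheses on $\gamma$ (closed and without affine inflexions): a sign change of $\mu$ would force an inflexion, and a constant negative $\mu$ would contradict closedness. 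So the nonvanishing of the versality determinant comes from the geometry of $\gamma$, not from the $A_3$ nondegeneracy inequality you were aiming for. Your computational plan (work in the basis $\{\gamma_s,\gamma_{ss}\}$, use $[\gamma_s,\gamma_{ss}]=1$) is the right one; you should just expect a different target and a different justification at the end.
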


\begin{proof}
Recall that 
$$
\begin{array}{ccl}
F(X,\alpha,s) & = & [X-\gamma,(1-\alpha)\gamma_s+\alpha\gamma_{ss}] \\
 & = & (x-x(s))((1-\alpha)y_s+\alpha y_{ss})-(y-y(s))((1-\alpha)x_s+\alpha x_{ss}).
\end{array}$$

We shall prove that $F$ is versal, e. g., that the matrix $J$  bellow has rank $2$. $$J=\left(\begin{array}{ccc}
F_x & F_y & F_\alpha \\
F_{xs} & F_{ys} & F_{\alpha s} \\
\end{array}\right)=$$$$=\left(\begin{array}{ccc}
(1-\alpha)y_s+\alpha y_{ss} & -(1-\alpha)x_s-\alpha x_{ss} &  \dfrac{\alpha}{(1-\alpha)^2+\mu\alpha^2} \\
(1-\alpha)y_{ss}-\alpha \mu y_{s} & -(1-\alpha)x_{ss}+\alpha \mu x_{s} & -\dfrac{1-\alpha}{(1-\alpha)^2+\mu\alpha^2} \\
\end{array}\right).
$$

Observe that the determinant of $J1$ is $(1-\alpha)^2+\mu\alpha^2$, where $$J1=\left(\begin{array}{cc}
F_x & F_y \\
F_{xs} & F_{ys} \\
\end{array}\right),$$ which is nonzero by assumption (see Remark \ref{r1} (a)). This finalizes the case where the singularity is type $A_2$ at $s_0.$

In the case where the singularity is type $A_3$ at $s_0$ we shall prove that the matrix $\bar{J}$ bellow has rank $3.$ to show that $F$ is versal.

$$\bar{J}=\left(\begin{array}{ccc}
F_x & F_y & F_\alpha \\
F_{xs} & F_{ys} & F_{\alpha s} \\
F_{xss} & F_{yss} & F_{\alpha ss} \\
\end{array}\right)=$$$$=\dfrac{1}{(1-\alpha)^2+\mu\alpha^2}\left(\begin{array}{ccc}
(1-\alpha)y_s+\alpha y_{ss} & -(1-\alpha)x_s-\alpha x_{ss} & \alpha \\
(1-\alpha)y_{ss}-\alpha \mu y_{s} & -(1-\alpha)x_{ss}+\alpha \mu x_{s} & -(1-\alpha) \\
F_{xss} & F_{yss} & \dfrac{(1-\alpha)^2}{\alpha} \\
\end{array}\right),
$$
where the derivatives $F_{xss}$ and $F_{yss}$ are given by
$$F_{xss}=-(1-\alpha)^3y_s-\alpha^3y_{ss}-2\alpha^2(1-\alpha)\mu y_s$$
$$F_{yss}= (1-\alpha)^3x_s+\alpha^3x_{ss}+2\alpha^2(1-\alpha)\mu x_s.$$ Observe that, using  the equation \eqref{eq12}, the determinant of $\bar{J}$ is given by $(\alpha^2\mu+3(1-\alpha)^2)/\alpha$, which is equal to zero iff $\mu(s_0)=-3(1-\alpha)^2/\alpha^2<0.$ If $\mu(s)$, for $s\in I$, does not change the signal, then $\gamma$ is not closed, which is contradiction. Now, if $\mu(s)$ changes the signal, then for some $\bar{s_0} \in I$, we have $\mu(\bar{s_0})=0$. Therefore, $\gamma$ has av affine inflexion at this point, which is another contradiction.
\end{proof}

The next result presents properties of the behavior of discriminant surface. For details, see \cite{arnold,bruce}.

\begin{proposition}
\begin{itemize}\label{swal}
\item[(i)] For a cuspidal edge surface, with $(X_0,\alpha_0)$ on the line of cusps, the level sets of $h$ on $D_F$ will all be cusped curves, provided that the plane $K$ does not contain the tangent to the line of cusps through $(X_0,\alpha_0).$ (“$K$ is transverse to the line of cusps”.)
On the other hand, the level sets undergo a “beaks” or “lips” transition, provided that $K$ does contain this tangent but does not coincide with the limiting tangent plane to the cuspidal edge surface at points approaching $(X_0,\alpha_0).$ (“$K$ is transverse to this limiting tangent plane”.)

\item[(ii)] For a swallowtail surface, with $(X_0,\alpha_0)$ at the swallowtail point, the level sets on $D_F$ undergo a swallowtail transition, with two cusps merging and disappearing, provided that $K$ does not contain the limiting tangent to the lines of cusps on $D_F$ at $(X_0,\alpha_0)$. (“$K$ is transverse to this limiting tangent line”.)
\end{itemize}
\end{proposition}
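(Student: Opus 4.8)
The plan is to translate the geometric statements in Proposition~\ref{swal} into the standard normal-form analysis for cuspidal edges and swallowtail surfaces, since by the previous theorem $D_F$ is locally diffeomorphic to one of these two standard surfaces, and the level sets of $h(x,y)=\alpha$ correspond under that diffeomorphism to the plane sections $K=\{z=\text{const}\}$ (where $z$ is the normal-form coordinate pulled back from $\alpha$). First I would fix the local models: the cuspidal edge is parametrised as $(u,v^2,v^3)$ and the swallowtail as the discriminant of $x^4+ux^2+vx+w$, i.e.\ $\{(u,v,w):\exists\,x,\ 4x^3+2ux+v=0,\ x^4+ux^2+vx+w=0\}$. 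For each model one takes a general affine plane $K$ through the distinguished point (the cusp line point, resp.\ the swallowtail point) and computes the resulting plane curve; the behaviour depends only on the position of $K$ relative to a small number of canonical linear subspaces (the tangent line to the cusp edge, the limiting tangent plane, the limiting tangent line to the cusp lines). This is classical and is exactly the content cited from \cite{arnold,bruce,Giblin4}, so the bulk of the proof is an appeal to those references together with the identification of the relevant subspaces.

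The key steps, in order, are: (1) invoke the previous theorem to get, near $(X_0,\alpha_0)\in D_F$, a diffeomorphism $\Phi$ of $(\mathbb{R}^3,(X_0,\alpha_0))$ onto $(\mathbb{R}^3,0)$ carrying $D_F$ to the standard cuspidal edge (when $r=2$) or swallowtail (when $r=3$); (2) check that $\Phi$ can be chosen so that the function $h=\alpha$ corresponds, up to a local diffeomorphism of the target $\mathbb{R}$, to the linear function picking out the unfolding parameter that was used in the versality matrix $J$ (resp.\ $\bar J$) — concretely, in Definition~\ref{versal} the columns $H_x,H_y,H_\alpha$ of the versality matrix record how $x,y,\alpha$ map onto the miniversal unfolding parameters, and the $\alpha$-direction is a genuine coordinate because the full matrix has maximal rank; (3) conclude that the level sets $\{h=\text{const}\}\cap D_F$ are diffeomorphic to the sections of the standard surface by a family of parallel planes $K$; (4) quote from \cite{arnold,bruce} the trichotomy for plane sections of the cuspidal edge (generic section: ordinary cusp; section whose plane contains the tangent to the edge but is transverse to the limiting tangent plane: beaks/lips transition) and the statement for sections of the swallowtail through the swallowtail point (generic such section: two cusps colliding, i.e.\ a swallowtail transition, provided the plane avoids the limiting tangent line to the two cusp curves). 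Item (4) is verbatim the assertion of the proposition once (1)--(3) are in place.

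The main obstacle I expect is step (2): one must verify that the plane $K$ in the statement is literally the image of a level set of $h$, i.e.\ that the coordinate change $\Phi$ straightening $D_F$ does not distort the $\alpha$-foliation in a way that changes the contact type of $K$ with the canonical subspaces. This is handled by being slightly more careful than ``$D_F$ is diffeomorphic to the standard surface'': one needs the relative statement that the \emph{pair} $(D_F, h)$ is diffeomorphic to the pair (standard surface, linear projection), which is exactly what versality of the unfolding with $\alpha$ as one of the unfolding directions buys — the argument is the one-parameter analogue of the fact that a versal unfolding is, up to diffeomorphism, the miniversal one times a trivial factor, and under that identification the parameter $z=\alpha$ maps to an affine coordinate transverse to nothing canonical a priori, so its level planes $K$ inherit precisely the freedom described (transverse to the cusp line, or containing its tangent, etc.). Once this relative normal form is invoked, the three bulleted conclusions are immediate from the tabulated plane-section data for $\Sigma_2$ and $\Sigma_3$ in \cite{arnold,bruce}, and the remaining non-degeneracy hypotheses (``$K$ transverse to $\ldots$'') are simply the open conditions under which those tabulated generic sections occur. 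A secondary, purely bookkeeping point is to record which concrete differential condition on $\mu$ and $\alpha$ (the ones appearing in Proposition~\ref{P5} distinguishing $A_2$ from $A_3$) corresponds to ``$K$ contains the tangent to the line of cusps'', so that the transitions are pinned to explicit slopes $\alpha$; this follows by differentiating the parametrisation \eqref{envelope} of the envelope along $D_F$ and comparing with the $\alpha$-direction, but it is routine.
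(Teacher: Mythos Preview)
The paper does not prove this proposition: it is stated without proof as a known fact from singularity theory, prefaced by ``For details, see \cite{arnold,bruce}.'' Your step~(4) does essentially the same thing---appeal to the tabulated plane-section behaviour for the standard cuspidal edge and swallowtail in those references---so at that level your proposal matches the paper's (non-)treatment.

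Where you go astray is in steps (1)--(3) and the ``main obstacle'' you flag in~(2). The proposition is a purely conditional statement about level sets of a function $h$ on a cuspidal-edge or swallowtail surface: \emph{given} that the level plane $K$ satisfies the stated transversality hypothesis, the sections look as claimed. It does not assert that those hypotheses actually hold for the particular $h(x,y,\alpha)=\alpha$ on the particular $D_F$, so there is nothing here to check about whether the versality diffeomorphism $\Phi$ respects the $\alpha$-foliation. That verification---showing that the plane $\alpha=\text{const}$ is always transverse to the line of cusps (resp.\ to the limiting tangent line at the swallowtail point)---is precisely the content of the \emph{next} theorem in the paper, and there it is carried out by an explicit tangent-vector computation on the cuspidal-edge curve $C\subset D_F$ in $(x,y,\alpha)$-space (working with the kernel of the augmented matrix $J_2$ and using nonsingularity of $J_1$, $\bar J$), not by the abstract relative normal-form argument you sketch. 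In short: strip your proposal down to step~(4) alone and you match the paper; the material in (1)--(3) belongs one theorem later, and the paper handles it by direct calculation rather than by invoking a pair-diffeomorphism $(D_F,h)\cong(\text{model},\text{projection})$.
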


In the next result, we prove that the Proposition \ref{swal} is always satisfied for the discriminant $D_F$.

\begin{theorem}
The affine evolutoids $E_\alpha$ evolve locally according to a stable cusp at $A_2$ points, where the affine curvature $\mu\neq0$; according to a swallowtail transition at $A_3$ points, where $\mu\neq0$. At all other points, the envelope $E_\alpha$ is a smooth curve.
\end{theorem}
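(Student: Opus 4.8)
The plan is to combine the two main ingredients already established: first, the local normal form of the discriminant $D_F$ near singular points, proved in the previous theorem via the versality criterion of Definition \ref{versal}; and second, the transition description for level sets of $h$ on cuspidal-edge and swallowtail surfaces, recorded in Proposition \ref{swal}. At an $A_2$ point with $\mu(s_0)\neq 0$ (the case described in Proposition \ref{P5}(i)), the previous theorem tells us that $D_F$ is locally diffeomorphic to a standard cuspidal edge, and the diffeomorphism can be chosen to carry the function $h(x,y)=\alpha$ to a linear function (since $h$ is affine in the ambient coordinates and the rank-$2$ Jacobian $J$ precisely says $\{F=F_s=0\}$ projects submersively); so the plane $K$ in Proposition \ref{swal}(i) is the tangent plane to the relevant level set $\{\alpha=\alpha_0\}$. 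I would then check that $K$ is transverse to the line of cusps, i.e.\ the tangent direction to the curve of $A_2$ points inside $D_F$ is not annihilated by $dh=d\alpha$; equivalently, that $\alpha$ is not locally constant along the curve of $A_2$ singularities. This is where the hypothesis $\mu\neq 0$ and the genericity built into the versality computation pay off: the same determinant $(1-\alpha)^2+\mu\alpha^2\neq 0$ that made $J$ have rank $2$ also forces the cusp-locus tangent to have nonzero $\alpha$-component, so $K$ is transverse and the level sets are (ordinary) cusped curves — a ``stable cusp.''

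For the $A_3$ points with $\mu(s_0)\neq 0$ (Proposition \ref{P5}(ii)), the previous theorem gives that $D_F$ is locally diffeomorphic to a standard swallowtail surface near $(X_0,\alpha_0)$, again with $h$ corresponding to a linear function because of the rank-$3$ matrix $\bar J$, whose determinant $(\alpha^2\mu+3(1-\alpha)^2)/\alpha$ was shown to be nonzero under the standing assumptions (no inflexions, $\gamma$ closed so $\mu$ cannot be a negative constant). The task then is to verify the hypothesis of Proposition \ref{swal}(ii): that the plane $K=\{\alpha=\alpha_0\}$ does not contain the limiting tangent to the lines of cusps on $D_F$ at the swallowtail point. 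I would extract this limiting tangent from the standard swallowtail model transported back by the diffeomorphism, and observe that the nonvanishing of $\det\bar J$ is exactly the statement that the three vectors $(F_x,F_y,F_\alpha)$, $(F_{xs},F_{ys},F_{\alpha s})$, $(F_{xss},F_{yss},F_{\alpha ss})$ are independent, which in the versal model means the $\alpha$-direction is not subordinate to the cusp-locus tangent; hence $K$ is transverse and the level sets genuinely undergo a swallowtail transition (two cusps merging and annihilating).

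Finally, for the complementary statement — at all other points $E_\alpha$ is a smooth curve — I would invoke Proposition \ref{regularidade_do_envelope}: the envelope \eqref{envelope} fails to be regular exactly when \eqref{nonregularity} holds, which is precisely the condition $F_{ss}=0$ (equation \eqref{eq12}) isolating the $A_{\geq 2}$ points; away from these the parametrization $X(s)$ has $X_s=A(s)v^\alpha$ with $A(s)\neq 0$ and $v^\alpha\neq 0$, so $E_\alpha$ is an immersed, hence locally smooth, curve. Assembling these three cases proves the theorem.

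The main obstacle I anticipate is not the local normal forms — those are handed to us by the preceding theorem — but the transversality bookkeeping: one must pin down, in the standard cuspidal-edge and swallowtail models, exactly which linear condition on $dh$ corresponds to ``$K$ transverse to the cusp line'' (resp.\ to the limiting tangent line), and then translate that condition back through the diffeomorphism into a statement about the original data $\gamma,\mu,\alpha$. The key point making it work is that in every case the relevant non-degeneracy reduces to the nonvanishing of the very determinants ($(1-\alpha)^2+\mu\alpha^2$ and $(\alpha^2\mu+3(1-\alpha)^2)/\alpha$) already computed in the versality proof, so no new hypotheses are needed beyond $\mu\neq 0$ and the global assumptions on $\gamma$.
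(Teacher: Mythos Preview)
Your proposal is correct and follows essentially the same route as the paper: after the versal normal forms are in hand, what remains is the transversality check of Proposition~\ref{swal}, and you correctly identify that the cusp-locus tangent must have nonzero $\alpha$-component because of the nonsingularity of $J_1$, i.e.\ $(1-\alpha)^2+\mu\alpha^2\neq 0$. The paper executes this slightly more concretely---it writes the cusp locus as $\{F=F_s=F_{ss}=0\}$, computes a kernel vector of the associated $3\times 4$ Jacobian directly, and reads off that the $\alpha$-component of the (limiting) tangent is $-((1-\alpha)^2+\alpha^2\mu)^2$, hence never zero---so no transport through the model diffeomorphism is actually needed.
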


\begin{proof}
Consider that $f(s)=F(X_0,\alpha_0,s)$ has an $A_2$ or $A_3$ singularity in $s=s_0$. In the case $A_2$, we known that $D_F$ is locally diffeomorphic to a cuspidal edge surface close to $(X_0,\alpha_0).$ This cuspidal edge is given by $F=F_s=F_{ss}=0$, e.g., three equations in the four variables $x, y, \alpha, s,$ and the solutions are then projected to $(x,y,\alpha)-$space, where $D_F$ lies. Let $J_2$ be the matrix formed by three columns of $\bar{J}$, thus increasing the column formed by $F_s, F_{ss}$ and $F_{sss}$. Notice that the fourth column is  the transpose of vector $(0,0,F_{sss}\neq0)$ at an $A_2$ point, and the transpose of $(0,0,0)$ at an $A_3$ point.  Thus, for an $A_2$ point, we can always find a kernel vector $(\bar{x},\bar{y},\bar{\alpha},\bar{s})$ whose first three components are not all zero, using the first two rows of $J_2$, and then determine $\bar{s}$ using the third row of $J_2$, since $F_{sss}\neq0.$ Then, $(\bar{x},\bar{y},\bar{\alpha})$ is a nonzero tangent vector to the line of cusps $C$ on $D_F$ in $(x,y,\alpha)-$space. However, this cannot be done with $\bar{\alpha}=0$ in view of the non-singularity of $J_1$, sub-matrix of $J_2$. So, a tangent vector to $C$ will never be horizontal, and changing $\alpha$ to nearby values gives a stable cusp.
There is clearly a problem with this argument at an $A_3$ point $(X_0,\alpha_0)$, where $F_{sss}=0$, since in view of the non-singularity of $\bar{J}$, all kernell vectors of $J_2$ have the form $(0,0,0,\bar{s})$. This simply says that, in $(x,y,\alpha)-$space, the curve $C$ on $D_F$ is singular at $(x_0,y_0,\alpha_0)$, which is true, since $D_F$ is a swallowtail surface and the space curve $C$ itself has a cusp at $(x_0,y_0,\alpha_0)$. However, the above argument still applies, by taking a unit tangent vector $(\bar{x},\bar{y},\bar{\alpha})$ and moving towards $(x_0,y_0,\alpha_0)$ along $C$: the last component cannot tend to $0$ without the other two tending to $0$ as well, which is a contradiction. In the present case, we can be more explicit: a tangent vector to $C$, obtained from the first two rows of $J_2$ is $$\left(2\alpha(\alpha-1)x_{ss}+x_s(\alpha^2\mu-(1-\alpha)^2),2\alpha(\alpha-1)y_{ss}+y_s(\alpha^2\mu-(1-\alpha)^2),-((1-\alpha)^2+\alpha^2\mu)^2\right).$$ It is clear that this cannot have a limit in which the third component is $0.$

\end{proof}

\bibliographystyle{amsplain}

\begin{thebibliography}{C-S-W}


\bibitem{arnold} V.I. Arnold, {\it Wave front evolution and equivariant Morse lemma.} Comm. Pure Appl. Math. 29, 557–582, 1976.

\bibitem{bruce} J.W. Bruce, {\it Functions on discriminants.} J. London Math. Soc. (2) 30, 551–567, 1984.

\bibitem{Giblin4} P. J. Giblin and J. P. Warder, {\it Curves and Singularities}. Cambridge University Press 1992, Second Edition, ISBN13: 978-0521429993.




\bibitem{Giblin3} P. J. Giblin and J.P.Warder, {\it Evolving Evolutoids}. American Math. Monthly, 121, 871-889, 2014.




\bibitem{Nomizu}  K. Nomizu and T. Sasaki, {\it Affine Differential Geometry.}
Cambridge University Press, 1994.


\bibitem{Sano} T. Sano, {\it Bifurcations of affine invariants for one-parameter family of generic convex plane curves.} Banach Center Publications, vol 50, 1999.

\bibitem{Buchin}  B. Su, {\it Affine Differential Geometry.} Gordon and Breach, 1983.
%
%








\end{thebibliography}

\end{document}